\renewcommand{\labelenumi}{$\mathrm{(\roman{enumi})}$}
\renewcommand{\labelenumii}{$\mathrm{(\alph{enumii})}$}
\title{A note on Hamiltonian stability for sheaves\footnote{2020 Mathematics Subject Classification: 37J39, 55N31, 35A27
\newline 
Keywords: interleaving distance, microlocal theory of sheaves}
}
\author{Tomohiro Asano \and Yuichi Ike\thanks{Supported by JSPS KAKENHI (21K13801 and 22H05107).}}
\date{\today}
\begin{document}
\maketitle

\begin{abstract}
    We show a strong Hamiltonian stability result for a simpler and larger distance on the Tamarkin category.
    We also give a stability result with support conditions.
\end{abstract}

\section{Introduction}

In \cite{AI20}, the authors introduced an interleaving-like distance $d_{\mathrm{int}}$ on the Tamarkin category by using the notion of ``$(a,b)$-interleaved" and proved a Hamiltonian stability theorem (\cite[Thm.~4.16]{AI20}) for the distance. 
Moreover, in \cite{AI22completeness}, we considered a distance $d_{\mathrm{w\text{-}isom}}$ on the Tamarkin category by using ``weakly $(a,b)$-isomorphic" and showed that the stability holds also for $d_{\mathrm{w\text{-}isom}}$ (\cite[Thm.~3.14]{AI22completeness}). 
This is a stronger result than the result in \cite{AI20} since $d_{\mathrm{int}} \le d_{\mathrm{w\text{-}isom}}$.
We can also equip the Tamarkin category with a distance $d_{\mathrm{isom}}$ by using ``$(a,b)$-isomorphic" (see, for example, \cite{AI22completeness} and \cite{guillermou2022gamma}), which satisfies the inequalities
\begin{equation}
    d_{\mathrm{w\text{-}isom}} \le d_{\mathrm{isom}} \le 2 d_{\mathrm{w\text{-}isom}}.
\end{equation}
In this note, we show that the Hamiltonian stability result holds for $d_{\mathrm{isom}}$.

Since we introduced several notions of distances on the Tamarkin category as above, sometimes it is confusing how to choose a distance to obtain the Hamiltonian stability result. 
The first aim of this note is to explicitly give the strongest stability result and state that we only need the distance $d_{\mathrm{isom}}$ in the end. 

The second aim is to give a Hamiltonian stability result with support conditions. 
We also prove an estimate not only by the oscillation norm $\|H\|_{\mathrm{osc}}$ of a Hamiltonian function $H$, but also by the oscillation norm $\|H\|_{\mathrm{osc},A}$ of $H$ restricted to a non-empty closed subset $A$, in the context of the sheaf-theoretic energy estimate.

\section{Preliminaries}

Let $X$ be a connected $C^\infty$-manifold without boundary.
We let $\pi \colon T^*X \to X$ denote its cotangent bundle and write $(x;\xi)$ for its local homogeneous coordinate system. 
We also let $0_X$ denote the zero-section of $T^*X$ and set $\rT X \coloneqq T^*X \setminus 0_X$. 
Throughout this note, we fix a field $\bfk$. 
We denote by $\SD(\bfk_X)$ the unbounded derived category of sheaves of $\bfk$-vector spaces on $X$.
In this note, we freely use the notions in the microlocal theory of sheaves~\cite{KS90} (see also \cite{Gu19}).
In particular, for an object $F \in \SD(\bfk_X)$, we let $\MS(F) \subset T^*X$ denote its microsupport.
We set $\rMS(F) \coloneqq \MS(F) \cap \rT X$.

\subsection{Distances on the Tamarkin category}

We write $(t;\tau)$ for the homogeneous coordinate system on the cotangent bundle $T^*\bR_t$.
It is prove by Tamarkin~\cite{Tamarkin} that the functor $P_l \coloneqq \bfk_{X \times [0,\infty)} \star (\ast) \colon \SD(\bfk_{X \times \bR_t}) \to \SD(\bfk_{X \times \bR_t})$ defines a projector onto ${}^{\bot} \SD_{\{\tau \le 0\}}(\bfk_{X \times \bR_t})$.
Here $\SD_{\{\tau \le 0\}}(\bfk_{X \times \bR_t})$ is the triangulated full subcategory consisting of the objects microsupported in $\{ \tau \le 0 \}= \{(x,t;\xi,\tau) \mid \tau \le 0 \} \subset T^*(X \times \bR_t)$ and ${}^\bot (\ast)$ denotes the left orthogonal.
The \emph{Tamarkin category} $\cD(X)$ is defined as the left orthogonal:
\begin{equation}
    \cD(X) \coloneqq {}^{\bot} \SD_{\{\tau \le 0\}}(\bfk_{X \times \bR_t}).
\end{equation}
For $c \in \bR$, let $T_c \colon X \times \bR_t \to X \times \bR_t, (x,t) \mapsto (x,t+c)$ denote the translation map to the $\bR_t$-direction by $c$. 
We simply write $T_c F$ to mean ${T_c}_*F$ for an object $F \in \cD(M)$.
Then, for $F \in \cD(X)$ and $c,d \in \bR_{\ge 0}$ with $c \le d$, one can get a canonical morphism $\tau_{c,d}(F) \colon T_c F \to T_d F$. 
By using the canonical morphism, we define several relations on pairs of objects of $\cD(X)$ as follows.

\begin{definition}
    Let $F,G \in \cD(X)$ and $a,b \in \bR_{\ge 0}$.
    \begin{enumerate}
        \item The pair $(F,G)$ is said to be \emph{$(a,b)$-isomorphic} if there exist morphism $\alpha \colon F \to T_a G$ and $\beta \colon G \to T_b F$ such that 
        \begin{enumerate}
        \renewcommand{\labelenumii}{$\mathrm{(\arabic{enumii})}$}
            \item the composite $F \xrightarrow{\alpha} {T_a} G \xrightarrow{{T_a} \beta} {T_{a+b}} F$ is equal to $\tau_{0,a+b}(F) \colon F \to {T_{a+b}} F$ and 
	        \item the composite $G \xrightarrow{\beta} {T_b} F \xrightarrow{{T_b} \alpha} {T_{a+b}} G$ is equal to $\tau_{0,a+b}(G) \colon G \to {T_{a+b}} G$.
        \end{enumerate}
        One defines 
        \begin{equation}
            d_{\mathrm{isom}}(F,G) \coloneqq \inf \lc a+b \relmid \text{$(F,G)$ is $(a,b)$-isomorphic} \rc. 
        \end{equation}
        \item The pair $(F,G)$ of objects of $\cD(X)$ is said to be \emph{weakly $(a,b)$-isomorphic} if there exist morphisms $\alpha, \delta \colon F \to {T_a} G$ and $\beta, \gamma \colon G \to {T_b} F$ such that
        \begin{enumerate}
        	\renewcommand{\labelenumii}{$\mathrm{(\arabic{enumii})}$}
        	\item $F \xrightarrow{\alpha} {T_a} G \xrightarrow{{T_a} \beta} {T_{a+b}} F$ is equal to $\tau_{0,a+b}(F) \colon F \to {T_{a+b}} F$,
        	\item $G \xrightarrow{\gamma} {T_b} F \xrightarrow{{T_b} \delta} {T_{a+b}} G$ is equal to $\tau_{0,a+b}(G) \colon G \to {T_{a+b}} G$, and
        	\item $\tau_{a,2a}(G) \circ \alpha=\tau_{a,2a}(G) \circ \delta$ and $\tau_{b,2b}(F) \circ \beta=\tau_{b,2b}(F) \circ \gamma$.
        \end{enumerate}
        One defines 
        \begin{equation}
            d_{\mathrm{w\text{-}isom}}(F,G) \coloneqq \inf \lc a+b \relmid \text{$(F,G)$ is weakly $(a,b)$-isomorphic} \rc. 
        \end{equation}
    \end{enumerate}
\end{definition}

The above terminology is the same as \cite{AI22completeness} and different from \cite{AI20}.
It is also similar to the terminology in \cite{KS18persistent}: a pair $(F,G)$ of objects of $\cD(\pt)$ is $(a,a)$-isomorphic if and only if $F$ and $G$ are $a$-isomorphic in the sense of \cite{KS18persistent}.
The notion of ``$(a,b)$-interleaved" is also defined as follows:
a pair $(F,G)$ of objects of $\cD(X)$ is said to be \emph{$(a,b)$-interleaved} if there exist morphisms $\alpha, \delta \colon F \to {T_a} G$ and $\beta, \gamma \colon G \to {T_b} F$ satisfying the conditions (1) and (2) above. 
We can also define 
\begin{equation}
    d_{\mathrm{int}}(F,G) \coloneqq \inf \lc a+b \relmid \text{$(F,G)$ is $(a,b)$-interleaved} \rc.
\end{equation}
Then, we have the chain of inequalities 
\begin{equation}
    d_{\mathrm{int}} \le d_{\mathrm{w\text{-}isom}} \le d_{\mathrm{isom}} \le 2 d_{\mathrm{w\text{-}isom}}.
\end{equation}

\subsection{Sheaf quantization of Hamiltonian isotopies}\label{subsec:SQ_ham}

For the later applications, we first state the main result of \cite{GKS} in a general form. 
Let $N$ be a connected non-empty manifold and $W$ a contractible open subset of $\bR^n$ with the coordinate system $(w_1,\dots,w_n)$ containing $0$. 
Let us consider $\psi =(\psi_w)_{w \in W} \colon \rT N \times W \to \rT N$ be a homogeneous Hamiltonian isotopy, that is, a $C^\infty$-map satisfying (1)~$\psi_w$ is homogeneous symplectic isomorphism for each $w \in W$ and (2)~$\psi_0=\id_{\rT N}$.
We can define a vector-valued homogeneous function $h \colon \rT N \times W \to \bR^n$ by $h=(h_1,\dots,h_n)$ with 
\begin{equation}
    \frac{\partial \psi_w}{\partial w_i} \circ \psi_{w}^{-1} = X_{h_i(\bullet,w)},
\end{equation}
where $X_{h_i(\bullet,w)}$ is the Hamiltonian vector field of the function $h_i(\bullet,w) \colon \rT N \to \bR$.
By using the function $h$, we define a conic Lagrangian submanifold $\Lambda_\psi$ of $\rT (N^2 \times W)$ by 
\begin{equation}
    \Lambda_\psi \coloneqq \lc \lb \psi_w(y;\eta), (y;-\eta), (u;-h(\psi_w(y;\eta),w)) \rb \relmid (y;\eta) \in \rT N, w \in W \rc
\end{equation}
The main theorem of \cite{GKS} is the following.

\begin{theorem}[{\cite[Thm.~3.7 and Rem.~3.9]{GKS}}]\label{thm:GKSmain}
    Let $\psi \colon \rT N \times W \to \rT N$ be a homogeneous Hamiltonian isotopy and set $\Lambda_\psi$ as above. 
    Then there exists a unique simple object $\tl{K} \in \SD(\bfk_{N^2 \times W})$ such that $\rMS(\tl{K})=\Lambda_{\psi}$ and $\tl{K}|_{N^2 \times \{0\}} \simeq \bfk_{\Delta_{N}}$.
\end{theorem}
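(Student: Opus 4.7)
The plan is to handle uniqueness by a microlocal propagation argument and then construct $\tl{K}$ by a local-to-global procedure on $W$, building short-time kernels from generating functions and composing them.

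First I would settle uniqueness. Suppose $\tl{K}$ and $\tl{K}'$ both satisfy the conclusion. Since $\Lambda_\psi$ is a smooth conic Lagrangian submanifold on which both objects are simple, the general theory of simple sheaves along a smooth Lagrangian says that the ``difference'' (more precisely, $\RcHom$ computed locally near each point of $\Lambda_\psi$) is a locally constant sheaf on $\Lambda_\psi$ up to a Maslov-type shift. The projection $\Lambda_\psi \to W$ obtained from the parametrization $(y,\eta,w) \mapsto w$ has connected (contractible) fibers $\rT N$, and $W$ itself is contractible, so $\Lambda_\psi$ is connected and simply connected. An isomorphism at $w=0$ therefore propagates to a global isomorphism, giving $\tl{K} \simeq \tl{K}'$.

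For existence, I would work in two stages. First, on a small enough neighborhood $W_0$ of any point of $W$, the isotopy $\psi$ can be factored into time-steps that are each $C^1$-close to the identity; for a homogeneous Hamiltonian isotopy close to $\id$, there is a homogeneous generating function, and the constant sheaf on the closed subset $\{f \ge 0\}$ cut out by it provides an explicit simple kernel with the prescribed microsupport (this is where one uses the computation of microsupport of a constant sheaf on a domain with smooth boundary). Second, I would compose these short-time kernels by sheaf convolution over the middle factor $N$ to obtain a kernel on $N^2 \times W_0$, and then cover $W$ by such contractible $W_0$'s. The uniqueness from the first step, restricted to overlaps, provides canonical gluing data, producing a global object $\tl K$ on $N^2 \times W$ with $\tl K|_{w=0} \simeq \bfk_{\Delta_N}$.

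The main obstacle is controlling the microsupport under composition. In general, the triangle inequality for microsupport only gives $\MS(K_1 \circ K_2) \subseteq \MS(K_1) \circ \MS(K_2)$, which could be larger than the composed Lagrangian; one needs the composition to be \emph{non-characteristic} so that equality holds and, crucially, the composed sheaf remains simple along the composed Lagrangian. This transversality follows from the fact that each $\psi_w$ is a diffeomorphism, so the graphs intersect transversally over the diagonal in $N$, but tracking it carefully along with the Maslov data to certify simpleness is the delicate step. Once this is done, the induction on the number of time-steps delivers the global $\tl K$ with $\rMS(\tl K)=\Lambda_\psi$.
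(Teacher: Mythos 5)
Your uniqueness argument has a factual error: the fiber $\rT N = T^*N \setminus 0_N$ of the projection $\Lambda_\psi \to W$ is \emph{not} contractible (for $N$ of dimension $\geq 2$ it is homotopy equivalent to the sphere bundle of $T^*N$, and for $\dim N = 1$ it is not even connected), so $\Lambda_\psi$ need not be simply connected and the propagation argument as you state it does not close. The repair is easy but uses a different observation: the parametrization gives $\Lambda_\psi \cong \rT N \times W$, and since $W$ is contractible the inclusion of the slice $\rT N \times \{0\}$ is a homotopy equivalence; hence a rank-one local system on $\Lambda_\psi$ is trivial as soon as its restriction to the $w=0$ slice is trivial, and the latter follows from $\tl K|_{w=0} \simeq \tl K'|_{w=0} \simeq \bfk_{\Delta_N}$. (In the original \cite{GKS} uniqueness is obtained differently, by composing with the inverse kernel and reducing to a sheaf microsupported on the conormal of $\Delta_N \times W$, which sidesteps $\pi_1(\rT N)$ entirely; that route is cleaner and you may prefer it.) You also do not address the passage from triviality of the $\mu hom$ local system to an actual isomorphism of objects, which is the content of the theory of simple sheaves and deserves a word.

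For existence, your route through homogeneous generating functions and short-time composition is a legitimate alternative and is close in spirit to expositions that came after \cite{GKS} (e.g.\ Guillermou's lecture notes), but it is a genuinely different argument than the one in \cite{GKS}, where the one-parameter case is handled by an open--closed continuation argument on the interval and the multi-parameter case is Remark~3.9. In your sketch two substantive points are left open and are not routine. First, the existence of a homogeneous generating function for a homogeneous Hamiltonian isotopy $C^1$-close to the identity is itself a theorem (and one must be careful because the relevant isotopy here is the homogenization of a compactly supported one, so control at infinity matters); you cannot simply invoke it. Second, even granting non-characteristicity of the compositions, you still need to verify that the composed kernel is simple with the correct shift along the composed Lagrangian -- you flag this as ``delicate'' but produce no argument, and this is precisely where the work lies. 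Finally, gluing over a cover of $W$ is not automatic for objects of a derived category: you need the uniqueness to furnish not just isomorphisms on overlaps but coherent homotopies, or else an argument (as in \cite{GKS}) that reduces to an increasing union of intervals where Zorn-style gluing is unproblematic.
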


For a non-homogeneous compactly supported Hamiltonian isotopy, we can associate a sheaf by homogenizing the isotopy, which we will explain below. 
In what follows until the end of this note, let $M$ be a connected $C^\infty$ manifold without boundary and  $I$ an open interval of $\bR$ containing the closed interval $[0,1]$.
We write $(x;\xi)$ for a local homogeneous coordinate system on $T^*M$.

\paragraph{1-parameter case}
We say that a $C^\infty$-function $H=(H_s)_{s \in I} \colon T^*M \times I \to \bR$ is \emph{timewise compactly supported} if $\supp(H_s)$ is compact for any $s \in I$. 
A compactly supported \emph{Hamiltonian isotopy} is a flow of the Hamiltonian vector field of a timewise compactly supported function $H$. 
Let $\phi^H=(\phi^H_s)_{s\in I} \colon T^*M \times I \to T^*M$ be the compactly supported Hamiltonian isotopy associated with a timewise compactly supported function $H \colon T^*M \times I \to \bR$.
We define $\wh{H}=(\wh{H}_s)_{s \in I} \colon \rT(M \times \bR_t) \times I \to \bR$ by 
\begin{equation}
    \wh H_s(x,t;\xi,\tau) \coloneqq 
    \begin{cases}
        \tau H_s(x;\xi/\tau) & (\tau\neq 0)\\
        0 & (\tau =0)
    \end{cases}
\end{equation}
and $\wh \phi=(\wh \phi_s)_{s \in I} \colon \rT(M \times \bR_t) \times I \to \rT(M \times \bR_t)$ to be the homogeneous Hamiltonian isotopy of $\wh H$.
Then we can apply \cref{thm:GKSmain} to the homogeneous Hamiltonian isotopy $\wh{\phi}$ and obtain a simple object $\tl{K}$ satisfying $\rMS(\tl{K})=\Lambda_{\wh{\phi}}$.
Since $h=\wh{H} \colon \rT(M\times \bR_t) \times I \to \bR$ in this case, we have
 \begin{equation}
\begin{aligned}
    \Lambda_{\wh{\phi}}
    \coloneqq 
    & 
    \left\{
    \left(
    \wh{\phi}_s(x,t;\xi,\tau), (x,t;-\xi,-\tau), (s;-\tau H_s(\phi_s(x;\xi/\tau)) \right)
    \; \middle| \; \right. \\
    & \hspace{150pt} \left. (x,t;\xi,\tau) \in \rT(M \times \bR_t), 
    s \in I
    \right\}.
\end{aligned}
\end{equation}

Define $q \colon (M \times \bR)^2 \times I \to M^2 \times \bR_t \times I, (x_1,t_1,x_2,t_2,s) \mapsto (x_1,x_2,t_1-t_2,s)$ and set 
\begin{equation}
    q_\pi q_d^{-1}(\Lambda_{\wh{\phi}}) 
    \subset \rT(M^2 \times \bR_t \times W).
\end{equation}
Then the inverse image functor $q^{-1}$ gives an equivalence (see \cite[Cor.~2.3.2]{Gu19})
\begin{equation}\label{eq:equivalence}
    \{ K \in \SD(\bfk_{M^2 \times \bR_t \times W}) \mid \rMS(K) = q_\pi q_d^{-1}(\Lambda_{\wh{\phi}}) \} 
    \simto 
    \{ \tl{K} \in \SD(\bfk_{(M \times \bR)^2 \times W}) \mid \rMS(K) = \Lambda_{\wh{\phi}} \}.
\end{equation}
Recall that we have a projector $P_l \colon \SD(\bfk_{M^2 \times \bR_t}) \to {}^\perp \SD_{\{\tau \le 0\}}(\bfk_{M^2 \times \bR_t})=\cD(M^2)$ and similarly for $\cD(M^2 \times I)$.

\begin{definition}
    Let $H \colon T^*M \times I \to \bR$ be a timewise compactly supported function.
    \begin{enumerate}
        \item One defines $K^H \in \SD(\bfk_{M^2 \times \bR_t \times I})$ to be the object such that $\rMS(K^H)=q_\pi q_d^{-1}(\Lambda_{\wh{\phi}})$ and $K^H|_{M^2 \times \bR_t \times \{0\}} \simeq \bfk_{\Delta_M \times \{0\}}$, that is, the object $K^H$ satisfying $q^{-1}K^H \simeq \tl{K}^H$. 
        One also sets $K^H_s \coloneqq K^H|_{M^2 \times \bR_t \times \{s\}}$ for simplicity.
        \item One defines $\cK^H_s \coloneqq P_l(K^H_s) \in \cD(M^2)$ and $\cK^H \coloneqq P_l(K^H) \in \cD(M^2 \times I)$. 
        The object $\cK^H$ is called the \emph{sheaf quantization} of the isotopy $\phi^H$ or associated with $H$.
    \end{enumerate}
\end{definition}

For a timewise compactly supported function $H \colon T^*M \times I \to \bR$, its oscillation norm is defined as 
\begin{equation}
    \|H\|_{\mathrm{osc}} \coloneqq \int_0^1 \left(\max_{p \in T^*M} H_s(p) - \min_{p \in T^*M} H_s(p) \right) ds.
\end{equation}
The weak stability theorem proved in the previous paper is the following.
 
\begin{theorem}[{\cite[Thm.~3.14]{AI22completeness}, cf.\ \cite[Thm.~4.16]{AI20}}]\label{theorem:SQ_inequality}
    Let $H \colon T^*M \times I \to \bR$ be a timewise compactly supported function. 
    Then $d_{\mathrm{w\text{-}isom}}(\cK^0_1,\cK^H_1) \le \|H\|_{\mathrm{osc}}$.
\end{theorem}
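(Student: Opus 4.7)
The plan is to construct the four morphisms required for weak $(a,b)$-isomorphism from the single global object $\cK^H$ on $M^2 \times \bR_t \times I$ by means of $s$-dependent $t$-translations and microlocal propagation. Set $A(s) \coloneqq \int_0^s \max_p H_u(p)\,du$ and $B(s) \coloneqq -\int_0^s \min_p H_u(p)\,du$; these are non-decreasing with $A(1) + B(1) = \|H\|_{\mathrm{osc}}$, so it suffices to show that $(\cK^0_1,\cK^H_1)$ is weakly $(A(1),B(1))$-isomorphic.

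From the explicit formula for $\Lambda_{\wh\phi}$ together with the equivalence \cref{eq:equivalence}, every element of $\MS(\cK^H)$ over $\{\tau>0\}$ has $s$-component equal to $-\tau H_s(\phi_s(x;\xi/\tau))$. For a smooth function $\rho \colon I \to \bR$ with $\rho(0)=0$, the diffeomorphism $\Theta_\rho \colon (x_1, x_2, t, s) \mapsto (x_1, x_2, t+\rho(s), s)$ shifts this component to $-\tau[H_s(\phi_s(x;\xi/\tau)) + \rho'(s)]$. Taking $\rho=B$ makes it non-positive on $\{\tau>0\}$, while $\rho=-A$ makes it non-negative there. Applying microlocal propagation (the Morse-type argument of \cite[Prop.~5.4.17]{KS90} in the Tamarkin setting, after the projector $P_l$) to the pushforwards $\Theta_{B,*}\cK^H$ and $\Theta_{-A,*}\cK^H$ then produces canonical comparison morphisms between the fibers $\cK^H_0$ and $\cK^H_1$ running in opposite directions. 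Undoing the translations and using $\cK^H_0 \simeq \cK^0_0 \simeq \cK^0_1$ yields $\alpha \colon \cK^0_1 \to T_{A(1)}\cK^H_1$ and $\beta \colon \cK^H_1 \to T_{B(1)}\cK^0_1$. Conditions~(1) and (2) of weak isomorphism then follow by naturality: the composite $T_{A(1)}\beta \circ \alpha$ (and its symmetric counterpart) lifts to a self-comparison of $\cK^H$ after the round-trip $t$-shift by $A(1)+B(1)$, and by propagation compatibility across concatenated subintervals it reduces to the canonical $\tau_{0, A(1)+B(1)}$.

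The second pair $(\delta,\gamma)$ is obtained by a variant of the same procedure with slightly perturbed admissible shifts $\rho$, so that all four morphisms factor through a common sheaf encoding both constructions; the resulting $\alpha$ and $\delta$, and likewise $\beta$ and $\gamma$, are a priori distinct. The main obstacle is condition~(3), namely $\tau_{A(1), 2A(1)}(\cK^H_1)\circ\alpha = \tau_{A(1), 2A(1)}(\cK^H_1)\circ\delta$ together with its counterpart for $\beta,\gamma$. This is the $\tau$-stabilization phenomenon central to the Tamarkin framework: the cone of the difference of two admissible constructions of $\alpha$ carries a refined microsupport bound that places it, after one additional shift, in a subcategory annihilated by $\tau_{A(1), 2A(1)}$. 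The fact that exactly one extra shift is required is why the weaker notion of \emph{weakly} isomorphic, rather than strictly, is the natural target of this theorem.
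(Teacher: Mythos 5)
This theorem is not proved in the note: it is quoted from \cite[Thm.~3.14]{AI22completeness}, and the closest in-paper argument is \cref{remark:weak}, which runs exactly the strategy you propose (an $s$-dependent translation in $t$ that normalizes the $\sigma$-component of the microsupport) but delegates the decisive step to \cite[Prop.~3.9]{AI22completeness}. Your first half is that strategy, and the computation $\sigma' = -\tau\bigl(H_s(\cdot)+\rho'(s)\bigr)$ is right. One bookkeeping slip: with $\rho=B$ the shifted sheaf has $\sigma\le 0$ on $\{\tau>0\}$, and propagation then produces a morphism from the $s=0$ fiber to the $s=1$ fiber, i.e.\ $\cK^0_1\to T_{B(1)}\cK^H_1$, while $\rho=-A$ produces $\cK^H_1\to T_{A(1)}\cK^0_1$; you have attached $A(1)$ and $B(1)$ to the opposite arrows (check the model $\cH=\bfk_{\{t\ge g(s)\}}$ with $g$ nondecreasing). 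This does not affect the final bound, since only $A(1)+B(1)=\|H\|_{\mathrm{osc}}$ enters, but it shows the construction was not traced through.

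The genuine gap is the second half. Beyond the microsupport estimate, the whole content of the theorem is the construction of the four morphisms and the verification of conditions (1)--(3) of weak $(a,b)$-isomorphism, and these are asserted rather than proved. An appeal to ``naturality'' and ``propagation compatibility across concatenated subintervals'' does not by itself give that $T_{a}\beta\circ\alpha$ equals $\tau_{0,a+b}(\cK^0_1)$: for that one must build all the comparison morphisms coherently out of the single object $\cK^H\in\cD(M^2\times I)$ (restrictions over subintervals of $I$ assembled with translation kernels) and verify the composites by explicit computation; this is precisely what \cite[Prop.~3.9]{AI22completeness} (and the corresponding constructions in \cite[\S 4]{AI20}) accomplishes. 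For condition (3), your proposed mechanism --- that the cone of the difference of two admissible constructions of $\alpha$ is torsion and is annihilated after exactly one further shift $\tau_{A(1),2A(1)}$ --- is unsupported: nothing bounds the torsion of such a cone, and no reason is given that the bound would be $A(1)$; in the actual proofs the relation $\tau_{a,2a}\circ\alpha=\tau_{a,2a}\circ\delta$ comes from the fact that $\alpha$ and $\delta$ are restrictions of one morphism defined over the parameter interval, the extra shift being produced by the same microsupport bound, not by a torsion-cone argument. As written, the proposal therefore reduces the theorem to exactly the technical proposition that constitutes the cited proof, without proving it; to complete your argument you must either prove that proposition or cite it, in which case only the (correct) translation trick of the first paragraph remains.
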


\paragraph{2-parameter case}
In the later application, we also use a 2-parameter Hamiltonian isotopy. 
Let $(G_{s',s})_{(s',s) \in I^2}$ be a 2-parameter family of compactly supported smooth functions on $T^*M$. 
A 2-parameter family of diffeomorphisms $(\phi_{s',s})_{(s',s)\in I^2}$ is determined by $\phi_{s',0}=\id_{T^*M} $ and $\frac{\partial \phi_{s',s} }{\partial s}\circ \phi_{s',s}^{-1}=X_{G_{s',s}}$, where $X_{G_{s',s}}$ is the Hamiltonian vector field corresponding to the function $G_{s',s}$. 
We set $\widehat{G}_{s',s}(x,t;\xi,\tau) \coloneqq \tau G_{s',s}(x;\xi/\tau)$ and define 
a 2-parameter homogeneous Hamiltonian isotopy $\wh{\phi}=(\widehat{\phi}_{s',s})_{s',s}$ by 
\begin{equation}
    \begin{cases}
        \widehat{\phi}_{s',0}=\id_{\rT(M \times \bR_t)}, \\
        \frac{\partial \widehat{\phi}_{s',s}}{\partial s} \circ \widehat{\phi}_{s',s}^{-1} = X_{\widehat{G}_{s',s}}.
    \end{cases}
\end{equation}
Then, we have
\begin{equation}
\begin{aligned}
    \Lambda_{\wh{\phi}} = \left\{ \lb \widehat{\phi}_{s',s}(y;\eta), (y;-\eta), (s'; -\widehat{F}_{s',s}(\widehat{\phi}_{s',s}(y;\eta))), (s;- \widehat{G}_{s',s}(\widehat{\phi}_{s',s}(y;\eta))) \rb \relmid \right. \\ 
    \left. (y;\eta) \in \rT(M \times \bR), s',s \in I \right\},
\end{aligned}
\end{equation}
where the 2-parameter family of homogeneous functions $(\widehat{F}_{s',s})_{s',s}$ is determined by $\frac{\partial \widehat{\phi}_{s',s}}{\partial s'} \circ \widehat{\phi}_{s',s}^{-1}=X_{\widehat{F}_{s',s}}$.
By the construction of $\widehat{\phi}$, there exists a 2-parameter family of compactly supported function $(F_{s',s})_{(s',s) \in I^2}$ satisfying $\widehat{F}_{s',s}(x,t;\xi,\tau)=\tau F_{s',s}(x;\xi/\tau)$ and $\frac{\partial \phi_{s',s} }{\partial s'}\circ \phi_{s',s}^{-1}=X_{F_{s',s}}$.
A calculation in \cite{polterovich2012geometry} or \cite{Oh02normalization} (see also \cite{banyaga1978structure}) shows that 
\begin{equation}\label{eq:another_function}
    \frac{\partial F_{s',s}}{\partial s}=\frac{\partial G_{s',s}}{\partial s'}-\{F_{s',s},G_{s',s}\},
\end{equation}
where $\{-,-\}$ is the Poisson bracket. 
In this case, we can also apply \cref{thm:GKSmain} to the homogeneous Hamiltonian isotopy $\wh{\phi}$ and obtain a simple object $\tl{K}$ satisfying $\rMS(\tl{K})=\Lambda_{\wh{\phi}}$.
By using the map $q \colon (M \times \bR)^2 \times I^2 \to M^2 \times \bR_t \times I^2, (x_1,t_1,x_2,t_2,s',s) \mapsto (x_1,x_2,t_1-t_2,s',s)$, we also obtain an equivalence similar to \eqref{eq:equivalence}. 
Hence, we can define $K \in \SD(\bfk_{M^2 \times \bR \times I^2})$ by the condition $\rMS(K)=q_d q_\pi^{-1} (\Lambda_{\wh{\phi}})$ and $\cK \coloneqq P_l(K) \in \cD(M^2 \times I^2)$, which we call the \emph{sheaf quantization} of $(\phi_{s',s})_{(s',s)\in I^2}$.

\section{Strong stability result}

Our first result is the following.

\begin{theorem}\label{theorem:dist_equal}
    Let $H \colon T^*M \times I \to \bR$ be a timewise compactly supported  function. 
    Then one has $d_{\mathrm{isom}}(\cK^0_1,\cK^H_1)=d_{\mathrm{w\text{-}isom}}(\cK^0_1,\cK^H_1)$.
\end{theorem}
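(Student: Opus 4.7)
The inequality $d_{\mathrm{w\text{-}isom}}(\cK^0_1,\cK^H_1) \le d_{\mathrm{isom}}(\cK^0_1,\cK^H_1)$ is immediate from the definitions: given an $(a,b)$-isomorphism $(\alpha,\beta)$, setting $\delta \coloneqq \alpha$ and $\gamma \coloneqq \beta$ produces a weak $(a,b)$-isomorphism, since condition $(3)$ becomes tautological. It remains to establish the reverse inequality for this specific pair.

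My plan is to prove the sharper statement that whenever $(\cK^0_1,\cK^H_1)$ is weakly $(a,b)$-isomorphic via morphisms $\alpha, \delta \colon \cK^0_1 \to T_a \cK^H_1$ and $\beta, \gamma \colon \cK^H_1 \to T_b \cK^0_1$, the pair $(\alpha,\beta)$ already realizes an $(a,b)$-isomorphism; taking infima then yields $d_{\mathrm{isom}} \le d_{\mathrm{w\text{-}isom}}$. Condition $(1)$ of $(a,b)$-isomorphic is literally condition $(1)$ of weakly $(a,b)$-isomorphic, so only condition $(2)$, namely $T_b \alpha \circ \beta = \tau_{0,a+b}(\cK^H_1)$, needs verification.

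The verification proceeds in two steps. First, a purely diagrammatic chase using the three conditions of weakly isomorphic together with naturality of $\tau_{\bullet,\bullet}$ gives the identity
\[
\tau_{a+b,2a+2b}(\cK^H_1) \circ T_b \alpha \circ \beta
= \tau_{a+b,2a+2b}(\cK^H_1) \circ \tau_{0,a+b}(\cK^H_1)
\]
in $\mathrm{Hom}(\cK^H_1, T_{2a+2b}\cK^H_1)$. Concretely: the first half of condition $(3)$ (applied inside $T_b$ and then post-composed with $\tau_{2a+b,2a+2b}(\cK^H_1)$) allows one to replace $T_b\alpha$ by $T_b\delta$; the second half of condition $(3)$ (combined with the naturality relation $T_{2b}\delta \circ \tau_{b,2b}(\cK^0_1) = \tau_{a+b,a+2b}(\cK^H_1) \circ T_b\delta$ and a further post-composition with $\tau_{a+2b,2a+2b}(\cK^H_1)$) allows one to replace $\beta$ by $\gamma$; finally condition $(2)$ of weakly isomorphic identifies the result as $\tau_{0,2a+2b}(\cK^H_1)$.

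The second, and crucial, step is to cancel the common post-factor $\tau_{a+b,2a+2b}(\cK^H_1)$. This amounts to showing that the post-composition map
\[
\tau_{a+b,2a+2b}(\cK^H_1) \circ (-) \colon \mathrm{Hom}(\cK^H_1, T_{a+b}\cK^H_1) \to \mathrm{Hom}(\cK^H_1, T_{2a+2b}\cK^H_1)
\]
is \emph{injective}; equivalently, that $\mathrm{Hom}(\cK^H_1, C[-1])=0$ where $C$ is the cone of $\tau_{a+b,2a+2b}(\cK^H_1)$. This injectivity is where the special structure of sheaf quantizations is essential and is the main technical obstacle of the proof: intuitively, $\cK^H_1$ behaves like a persistence module whose bars are all half-infinite (of the form $[c_i,\infty)$), so that translation has no kernel on morphisms out of it. I would establish it microlocally: the cone $C$ has microsupport confined to a bounded strip in the $t$-direction, while the microsupport of $\cK^H_1$ is described explicitly via \cref{thm:GKSmain} as the graph of the time-$1$ homogeneous Hamiltonian map, and these two microsupports are arranged so that any morphism from $\cK^H_1$ into $C[-1]$ must vanish. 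This vanishing is precisely the step that uses that $\cK^H_1$ arises from a Hamiltonian isotopy rather than being an arbitrary object of $\cD(M)$, and it runs parallel to the microlocal arguments underlying \cref{theorem:SQ_inequality}.
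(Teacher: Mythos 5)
Your reduction and your first step are sound and in fact mirror the paper's own argument: the paper carries out the same chase, phrased as the equality $\overline{\alpha}\circ\overline{\beta}=1$ in the localization $\cD(M^2)/\Tor$, whose Hom set is the colimit $\varinjlim_{c}\Hom_{\cD(M^2)}(-,T_c-)$, and then concludes $T_b\alpha\circ\beta=\tau_{0,a+b}(\cK^H_1)$ from an injectivity statement, exactly as you intend. The genuine gap is in your second step, which you yourself flag as the crux. The injectivity of post-composition with $\tau_{a+b,2a+2b}(\cK^H_1)$ cannot be obtained from the microsupport picture you sketch: the microsupport of the cone $C$ is conic and contained in $\MS(T_{a+b}\cK^H_1)\cup\MS(T_{2a+2b}\cK^H_1)$, i.e.\ in $t$-translates of $\MS(\cK^H_1)$ itself, so it is neither ``confined to a bounded strip'' (you are conflating the torsion property of $C$ with a microsupport constraint) nor disjoint from $\MS(\cK^H_1)$ in any way a standard microlocal vanishing criterion could exploit. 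Moreover, morphisms from a non-torsion object into a torsion object need not vanish at all; $\Hom(\cK^H_1,C)$ is typically nonzero in degree $0$, and what you need is vanishing precisely of the degree $-1$ group (equivalently, injectivity of the induced map on $\Hom^0$), which is a question of explicit computation, not of support or microsupport separation.

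The place where the Hamiltonian origin of $\cK^H_1$ really enters is through invertibility rather than through the geometry of $\Lambda_{\wh{\phi}}$: being a GKS sheaf quantization, $\cK^H_1$ is invertible for composition of kernels, so $\Hom_{\cD(M^2)}(\cK^H_1,T_d\cK^H_1)\simeq\Hom_{\cD(M^2)}(\cK^0_1,T_d\cK^0_1)$ with $\cK^0_1\simeq\bfk_{\Delta_M\times[0,\infty)}$, and this is $\bfk$ for $d\ge 0$ and $0$ for $d<0$. The transition map $\Hom(\cK^H_1,T_{a+b}\cK^H_1)\to\Hom(\cK^H_1,T_{2a+2b}\cK^H_1)$ is then a map $\bfk\to\bfk$ sending $\tau_{0,a+b}$ to $\tau_{0,2a+2b}\neq 0$, hence injective, and your cancellation goes through. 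With this replacement your argument coincides with the paper's proof; as written, the key injectivity is asserted but not proved, and the microlocal route you propose for it would not work.
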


\begin{corollary}\label{corollary:stability}
    In the situation of \cref{theorem:dist_equal}, one has $d_{\mathrm{isom}}(\cK^0_1,\cK^H_1) \le \|H\|_{\mathrm{osc}}$.
\end{corollary}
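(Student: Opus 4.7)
The corollary is an immediate consequence of the two results it is wedged between. The plan is simply to chain them:
\begin{equation}
    d_{\mathrm{isom}}(\cK^0_1,\cK^H_1) = d_{\mathrm{w\text{-}isom}}(\cK^0_1,\cK^H_1) \le \|H\|_{\mathrm{osc}},
\end{equation}
where the equality is \cref{theorem:dist_equal} applied to the pair $(\cK^0_1,\cK^H_1)$ and the inequality is the weak stability \cref{theorem:SQ_inequality} recalled from \cite{AI22completeness}.

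It is worth pointing out why this statement is more than a cosmetic rearrangement of existing bounds. The general chain of inequalities $d_{\mathrm{w\text{-}isom}} \le d_{\mathrm{isom}} \le 2 d_{\mathrm{w\text{-}isom}}$, which holds for arbitrary pairs in $\cD(M^2)$, combined with \cref{theorem:SQ_inequality} would only give the weaker bound $d_{\mathrm{isom}}(\cK^0_1,\cK^H_1) \le 2\|H\|_{\mathrm{osc}}$, losing a factor of two. The sharp estimate in the corollary thus rests entirely on the strict equality of the two distances for sheaf quantizations, and all the substantive work is concentrated in \cref{theorem:dist_equal} itself; the corollary presents no further obstacle. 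The hard part — namely upgrading a weak $(a,b)$-isomorphism to a genuine $(a,b)$-isomorphism in the special case of the pair $(\cK^0_1,\cK^H_1)$ — will presumably exploit the extra rigidity supplied by the two-parameter sheaf quantization framework of Section~2, rather than any general categorical comparison.
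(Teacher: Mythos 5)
Your proof is correct and is exactly the paper's (implicit) argument: the corollary follows at once by combining \cref{theorem:dist_equal} with the weak stability bound of \cref{theorem:SQ_inequality}, and no extra work is needed. (Only your closing speculation is slightly off the mark: the paper establishes \cref{theorem:dist_equal} by a localization argument modulo torsion objects rather than via the two-parameter quantization framework, but this has no bearing on the validity of your deduction of the corollary.)
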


\begin{proof}[Proof of \cref{theorem:dist_equal}]
    We only need to show that $d_{\mathrm{isom}}(\cK^0_1,\cK^H_1) \le d_{\mathrm{w\text{-}isom}}(\cK^0_1,\cK^H_1)$. 
    
    Let $\Tor$ be the full triangulated subcategory of $\cD(M^2)$ consisting of torsion objects $\{F \mid d_{\cD(M^2)}(F, 0)<\infty\}$. 
    Then, the Hom set of the localized category $\cD(M^2)/\Tor$ is computed as 
    \begin{equation}
        \Hom_{\cD(M^2)/\Tor}(F,G) \simeq \varinjlim_{c\to \infty}\Hom_{\cD(M^2)}(F,T_cG). 
    \end{equation}
    
    For the objects $\cK^0_1, \cK^H_1$ and $d \in \bR$, we have 
    \begin{equation}
        \Hom_{\cD(M^2)}(\cK^H_1,T_d \cK^H_1) \simeq 
        \Hom_{\cD(M^2)}(\cK^0_1,T_d \cK^0_1) \simeq 
        \begin{cases}
            \bfk & (d \ge 0) \\
            0 & (d <0).
        \end{cases}
    \end{equation}
    Hence, $\Hom_{\cD(M^2)/\Tor}(\cK^H_1,\cK^H_1) \simeq \bfk$ and the canonical morphism
    \begin{equation}
        \Hom_{\cD(M^2)}(\cK^H_1,T_d \cK^H_1) \to \Hom_{\cD(M^2)/\Tor}(\cK^H_1,\cK^H_1), \alpha' \mapsto \overline{\alpha'}
    \end{equation}
    is injective for $d \ge 0$. 
    
    Let $c>d_{\mathrm{w\text{-}isom}}(\cK^0_1,\cK^H_1)$.
    Then, by definition, there exist $a,b \in \bR_{\ge 0}$ with $c=a+b$ and morphisms $\alpha, \delta \colon \cK^0_1 \to T_a \cK^H_1$ and $\beta, \gamma \colon \cK^H_1 \to T_b \cK^0_1$ such that
    \begin{enumerate}
    	\renewcommand{\labelenumi}{$\mathrm{(\arabic{enumi})}$}
    	\item $\cK^0_1 \xrightarrow{\alpha} {T_a} \cK^H_1 \xrightarrow{{T_a} \beta} {T_{a+b}} \cK^0_1$ is equal to $\tau_{0,a+b}(\cK^0_1) \colon \cK^0_1 \to {T_{a+b}} \cK^0_1$,
    	\item $\cK^H_1 \xrightarrow{\gamma} {T_b} \cK^0_1 \xrightarrow{{T_b} \delta} {T_{a+b}} \cK^H_1$ is equal to $\tau_{0,a+b}(\cK^H_1) \colon \cK^H_1 \to {T_{a+b}} \cK^H_1$, and
    	\item $\tau_{a,2a}(\cK^H_1) \circ \alpha=\tau_{a,2a}(\cK^H_1) \circ \delta$ and $\tau_{b,2b}(\cK^0_1) \circ \beta=\tau_{b,2b}(\cK^0_1) \circ \gamma$.
    \end{enumerate}
    By the condition~(3), we have 
    \begin{align}
        \overline{\alpha}=\overline{\delta} & \in \Hom_{\cD(M^2)/\Tor}(\cK^0_1,\cK^H_1) \\
        \overline{\beta}=\overline{\gamma} & \in \Hom_{\cD(M^2)/\Tor}(\cK^H_1,\cK^0_1).
    \end{align}
    Hence, by the condition~(2), we get
    \begin{equation}
        1 = \overline{\delta} \circ \overline{\gamma} = \overline{\alpha} \circ \overline{\beta} \in \Hom_{\cD(M^2)/\Tor}(\cK^H_1,\cK^H_1).
    \end{equation}
    Since $T_b \alpha \circ \beta \in \Hom_{\cD(M^2)}(\cK^H_1,T_{a+b} \cK^H_1)$ is sent to $\overline{\alpha} \circ \overline{\beta}$ and $\tau_{0,a+b}(\cK^H_1)$ is sent to $1$, by the injectivity we obtain $T_b \alpha \circ \beta=\tau_{0,a+b}(\cK^H_1)$. 
    Thus, combining this with the condition~(1), we find that the pair $(\alpha,\beta)$ gives  $(a,b)$-isomorphisms for the pair $(\cK^0_1,\cK^H_1)$, which completes the proof.
\end{proof}

\begin{remark}
    By a similar argument, we can also improve \cite[Prop.~5.1]{AI22completeness}. 
    That is, we can replace the distance $d_{\mathrm{w\text{-}isom}}$ in the inequality
    \begin{equation}
        d_{\mathrm{w\text{-}isom}}(K^0_1,K^H_1) \le 2 \int_0^1 \|H_s\|_\infty ds \le 2\|H\|_{\mathrm{osc}}
    \end{equation}
    with a simpler and larger distance $d_{\mathrm{isom}}$, where $d_{\mathrm{w\text{-}isom}}$ (resp.\ $d_{\mathrm{isom}}$) is the distance on $\SD(\bfk_{M^2 \times \bR_t})$ defined by the weakly $(a,b)$-isomorphic (resp.\ $(a,b)$-isomorphic) relations for thickening kernel $\frakK_c = \bfk_{\Delta_M \times [-c,c]}$ (\cite{petit2020thickening})\footnote{Here, we regard $\frakK$ as a thickening kernel with respect to the operation $\bullet$ (see \cref{section:support_condition}).}.
\end{remark}

\section{Stability with support conditions}\label{section:support_condition}

For a closed subset $A$ of $T^*M$, we define a full subcategory $\cD_A(M)$ of $\cD(M)$ by 
\begin{equation}
    \cD_A(M) \coloneqq \{ F \in \cD(M) \mid \MS(F) \cap \{ \tau >0\} \subset \rho_t^{-1}(A) \},
\end{equation}
where $\rho_t \colon T^*M \times T^*_{\tau >0}\bR_t \to T^*M, (x,t;\xi,\tau) \mapsto (x;\xi/\tau)$.

For $K \in \SD(\bfk_{X \times M \times \bR})$ and $F \in \SD(\bfk_{M \times \bR_t})$, we set 
\begin{equation}
    K \bullet F \coloneqq \rR m_!(\tilde{q}_{1}^{-1}K \otimes \tilde{q}_{2}^{-1}F),
\end{equation}
where 
\begin{align}
    & \tilde{q}_{1} \colon X \times M \times \bR^2 \to X \times M \times \bR, 
    (x_1,x_2,t_1,t_2) \mapsto (x_1,x_2,t_1), \\
    & \tilde{q}_{2} \colon X \times M \times \bR^2 \to M \times \bR, 
    (x_1,x_2,t_1,t_2) \mapsto (x_2,t_2), \\
    & m \colon X \times M \times \bR^2 \to X \times \bR, 
    (x_1,x_2,t_1,t_2) \mapsto (x_1,t_1+t_2).
\end{align}
In what follows, we consider the following particular case.
Let $\cK^H \in \cD(M^2 \times I)$ be the sheaf quantization associated with a timewise compactly supported function $H \colon T^*M \times I \to \bR$ and $F \in \cD_A(M)$ with $A$ being a closed subset of $T^*M$.
Then we get $\cK^H \bullet F \in \cD(M \times I)$ and find that 
\begin{equation}
    \cK^H_s \bullet F \simeq (\cK^H \bullet F)|_{M \times \{s\} \times \bR_t} \in \cD_{\phi^H_s(A)}(M) \quad \text{for any $s \in I$}.
\end{equation}
We shall estimate the distance between $F \in \cD_A(M)$ and $\cK^H_1 \bullet F \in \cD_{\phi^H_1(A)}(M)$ up to translation.
See also \cref{remark:weak} for a more straightforward but weaker case. 

\begin{theorem}\label{theorem:metric_support}
    Let $A$ be a non-empty closed subset of $T^*M$ and $F \in \cD_A(M)$.
    Moreover, let $H \colon T^*M \times I \to \bR$ be a timewise compactly supported function. 
    Then for a continuous function $f\colon I\to \bR$, one has
	\begin{equation}\label{eq:bound}
	\begin{aligned}
	    & d_{\mathrm{isom}}(F,T_{-c} \cK^H_1 \bullet F) \\
		\le {} & \int_0^1 \lb \max \left\{ \max_{p \in \phi^H_s(A)}H_s(p), f(s) \right\} -\min \left\{ \min_{p \in \phi^H_s(A)}H_s(p), f(s)\right\} \rb ds
	\end{aligned}	
	\end{equation}
	where $c=\int_0^1 f(s) ds$. 
\end{theorem}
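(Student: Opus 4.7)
The plan is to reduce \cref{theorem:metric_support} to the standard Hamiltonian stability \cref{theorem:SQ_inequality} (upgraded via \cref{theorem:dist_equal}) applied to a truncated Hamiltonian $\widetilde H$ that coincides with $H$ near the support trajectories $\bigcup_s \{s\}\times\phi^H_s(A)$ but whose oscillation matches the right-hand side of \eqref{eq:bound}.

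First I reduce to $f\equiv 0$. Set $H'_s \coloneqq H_s - f(s)$. Since $f(s)$ is $p$-independent, the Hamiltonian flow is unchanged: $\phi^{H'} = \phi^H$. On the homogenized side, $\wh{H'}_s = \wh{H}_s - \tau f(s)$, and the Hamiltonian vector field of $-\tau f(s)$ is the commuting field $-f(s)\partial_t$, so $\wh\phi^{H'}_s = T_{-\int_0^s f\,ds'}\circ\wh\phi^H_s$; by uniqueness in \cref{thm:GKSmain} one deduces $\cK^{H'}_1 \simeq T_{-c}\cK^H_1$ with $c = \int_0^1 f\,ds$, whence $T_{-c}\cK^H_1\bullet F \simeq \cK^{H'}_1\bullet F$. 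Writing $m_s\coloneqq\max_{\phi^H_s(A)}H_s$ and $n_s\coloneqq\min_{\phi^H_s(A)}H_s$, the pointwise identity
\[
\max\{m_s,f(s)\}-\min\{n_s,f(s)\}=\max\{m_s-f(s),0\}-\min\{n_s-f(s),0\}
\]
shows the right-hand side of \eqref{eq:bound} is invariant under $(H,f)\mapsto(H',0)$, so we may assume $f\equiv 0$.

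Next, given $\epsilon>0$, I construct a timewise compactly supported smooth $\widetilde H$ with $(\mathrm{i})$~$\widetilde H=H$ on an open neighborhood of $\bigcup_s\{s\}\times\phi^H_s(A)$ in $T^*M\times I$, and $(\mathrm{ii})$~$\min\{n_s,0\}-\epsilon\le\widetilde H_s\le\max\{m_s,0\}+\epsilon$ pointwise. Take $\widetilde H_s(p)\coloneqq\psi_s(H_s(p))$, where $\psi_s\colon\bR\to\bR$ is smooth, equal to the identity on $[\min\{n_s,0\}-\delta,\max\{m_s,0\}+\delta]$ for some small $\delta>0$, and bounded in $[\min\{n_s,0\}-\epsilon,\max\{m_s,0\}+\epsilon]$; since $H_s(p)\in[n_s,m_s]\subseteq[\min\{n_s,0\},\max\{m_s,0\}]$ for $p\in\phi^H_s(A)$, condition~$(\mathrm{i})$ holds on the open set $\{p:H_s(p)\in(\min\{n_s,0\}-\delta,\max\{m_s,0\}+\delta)\}$. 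Then $\|\widetilde H\|_{\mathrm{osc}}$ lies within $O(\epsilon)$ of the right-hand side of \eqref{eq:bound}. Since $\widetilde H=H$ near the support trajectories, $\phi^{\widetilde H}$ agrees with $\phi^H$ on $A$, and one has the locality isomorphism $\cK^H_1\bullet F \simeq \cK^{\widetilde H}_1\bullet F$ in $\cD(M)$ (addressed below). Applying \cref{theorem:SQ_inequality} and \cref{theorem:dist_equal} to $\widetilde H$ gives $d_{\mathrm{isom}}(\cK^0_1,\cK^{\widetilde H}_1)\le\|\widetilde H\|_{\mathrm{osc}}$, and convolving the associated $(a,b)$-isomorphism pair with $F$ via $\bullet$—compatible with the translations $T_c$ and the canonical morphisms $\tau_{c,d}$—produces an $(a,b)$-isomorphism for $(F,\cK^{\widetilde H}_1\bullet F)$. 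Combining with the locality isomorphism and letting $\epsilon\to 0$ completes the proof.

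The main obstacle is establishing the locality isomorphism $\cK^H_1\bullet F \simeq \cK^{\widetilde H}_1\bullet F$. Heuristically, $\bullet F$ only probes $\cK^H$ along flow lines starting microlocally in $A$, where $H$ and $\widetilde H$ agree. To formalize this, I would interpolate $H$ and $\widetilde H$ by a $2$-parameter Hamiltonian $G_{s',s}$ with $G_{0,s}=H_s$, $G_{1,s}=\widetilde H_s$, and $G_{s',s}=H_s$ on a uniform neighborhood of $\phi^H_s(A)$ for every $s'\in[0,1]$. Then $\phi_{s',s}(A)=\phi^H_s(A)$ for all $s'$, and the transverse Hamiltonian $F_{s',s}$ determined by \eqref{eq:another_function}—after a suitable normalization—vanishes on $\phi_{s',s}(A)$, so the microsupport of the resulting $2$-parameter sheaf quantization, after $\bullet F$, has trivial $s'$-component over the microsupport of $F$. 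The resulting sheaf is therefore locally constant in $s'$, yielding the isomorphism between its fibers at $s'=0$ and $s'=1$. Rigorously performing this microsupport calculation and normalizing $F_{s',s}$ so that it vanishes on the trajectories (rather than merely having vanishing Hamiltonian vector field there) is where the main technical care is needed.
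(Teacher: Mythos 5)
Your overall architecture---truncate the Hamiltonian so that it agrees with $H$ near the trajectory $\bigcup_s \phi^H_s(A)$ while its oscillation matches the right-hand side of \eqref{eq:bound}, then transfer the comparison through $\bullet F$ by a $2$-parameter interpolation whose transverse Hamiltonian is controlled on the trajectory---is the paper's strategy, and your sketch of the locality step is essentially the paper's argument. The genuine gap is in your first step, the reduction to $f\equiv 0$. Replacing $H_s$ by $H'_s=H_s-f(s)$ destroys the hypothesis that the Hamiltonian is timewise compactly supported: outside $\supp(H_s)$ one has $H'_s\equiv -f(s)$. Consequently your truncation $\widetilde H_s=\psi_s\circ H'_s$ is in general \emph{not} compactly supported, contrary to what you assert (its value at infinity is $\psi_s(-f(s))$, which equals $-f(s)$ whenever $-f(s)$ lies in the window on which $\psi_s=\mathrm{id}$, e.g.\ whenever $\phi^H_s(A)$ meets the complement of $\supp(H_s)$), so \cref{theorem:SQ_inequality} and \cref{corollary:stability} do not apply to it. This is not a removable technicality: the oscillation norm is blind to adding a time-dependent constant, while the sheaf quantization is not (it gets translated), so the estimate $d_{\mathrm{isom}}(\cK^0_1,\cK^{\widetilde H}_1)\le\|\widetilde H\|_{\mathrm{osc}}$ is false for Hamiltonians that do not vanish at infinity. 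Concretely, take $H\equiv 0$, $f\equiv 1$, $A=0_M$, $F=\bfk_{M\times[0,\infty)}$: then $H'\equiv -1$, your $\widetilde H$ equals $H'$, $\|\widetilde H\|_{\mathrm{osc}}=0$, and your chain of reasoning would give $d_{\mathrm{isom}}(F,T_{-1}F)=O(\epsilon)$, whereas this distance equals $1$ (which is exactly the theorem's bound in this case).

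The paper avoids this trap by never detaching the $f$-shift from the compact-support normalization: the shift enters only through the $2$-parameter family $G_{s',s}=\bigl(\rho_{a(s',s),b(s',s)}\circ H_s-(1-s')\tilde f(s)\bigr)\chi$, which is compactly supported for every $(s',s)$, has oscillation close to $B(H,f,A)$ at $s'=0$, equals $H_s$ at $s'=1$, and is $s'$-independent up to a time-dependent constant near the trajectory of $A$; the translation $T_{-\tilde c(1)}$ in the statement is then produced by the computation $F_{s',s}=\tilde c(s)$ on the trajectory via \eqref{eq:another_function}, not by a global shift of $H$. If you drop the reduction and instead build the (smoothed) $f$ into the interpolating family in this way, your truncation-plus-locality argument becomes the paper's proof of the general case; as written, the case $f\not\equiv 0$ does not go through. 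Minor points you would also need to supply: smooth $f$ to some $\tilde f$ within $\varepsilon$ (since $f$ is only continuous, both for the flow and for the shear in $s'$), choose the truncation windows smoothly in $s$ (the paper's $m(s),M(s)$), and justify that applying $\bullet F$ to an $(a,b)$-isomorphism for $(\cK^0_1,\cK^{\widetilde H}_1)$ yields one for $(F,\cK^{\widetilde H}_1\bullet F)$; these are standard.
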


\begin{remark}\label{remark:metric_support}
    If we take $f\equiv 0$, we obtain 
    \begin{equation}
    \begin{aligned}
        & d_{\mathrm{isom}}(F, \cK^H_1 \bullet F) \\
		\le {} & \int_0^1 \lb \max \left\{ \max_{p \in \phi^H_s(A)}H_s(p), 0 \right\} -\min \left\{ \min_{p \in \phi^H_s(A)}H_s(p), 0\right\} \rb ds. 
    \end{aligned}
	\end{equation}

    Let $c \in \bR$ be a real number satisfying 
    \begin{equation}
        \int_0^1 \min_{p \in \phi^H_s(A)}H_s(p) ds\le c \le \int_0^1 \max_{p \in \phi^H_s(A)}H_s(p) ds. 
    \end{equation}
    Then we can take $f$ such that $c=\int_0^1 f(s) ds$ and 
    \begin{equation}
        \min_{p \in \phi^H_s(A)}H_s(p) \le f(s)\le \max_{p \in \phi^H_s(A)}H_s(p)
    \end{equation}
    for any $s \in I$. 
    Hence, by \cref{theorem:metric_support}, we get
	\begin{equation}
		d_{\mathrm{isom}}(F, T_{-c} \cK^H_1 \bullet F)
		\le \int_0^1 \lb \max_{p \in \phi^H_s(A)}H_s(p) - \min_{p \in \phi^H_s(A)}H_s(p) \rb ds.
	\end{equation}
\end{remark}

For simplicity, we introduce a symbol for the right-hand side of \eqref{eq:bound}. 
For a function $H \colon T^*M \times I \to \bR$, a function $f \colon I \to \bR$, and a non-empty closed subset $A$ of $T^*M$, we set 
\begin{equation}
    B(H,f,A) 
    \coloneqq 
    \int_0^1 \lb \max \left\{ \max_{p \in \phi^H_s(A)}H_s(p), f(s) \right\} -\min \left\{ \min_{p \in \phi^H_s(A)}H_s(p), f(s) \right\} \rb ds. 
\end{equation}

\begin{proof}[Proof of \cref{theorem:metric_support}]
	Let $\varepsilon >0$.
	We can take a smooth family $(\rho_{a,b}\colon \bR\to \bR)_{a,b}$ of smooth functions parametrized by $a,b\in \bR$ with $a\le b$ such that 
	\begin{enumerate}
	\renewcommand{\labelenumi}{$\mathrm{(\arabic{enumi})}$}
	    \item $\rho_{a,b}(y)=y$ on a neighborhood of $[a,b]$,
	    \item $a-\varepsilon \le \inf_y \rho_{a,b}(y) < \sup_y \rho_{a,b}(y) \le b+\varepsilon$. 
	\end{enumerate}
    Recall that $I$ denotes an open interval containing the closed interval $[0,1]$.
	We take smooth functions $M, m \colon I\to \bR$ satisfying 
	\begin{equation}
	    \max_{p \in \phi^H_s(A)}H_s(p) +\frac{\varepsilon}{2} \le M(s) \le \max_{p \in \phi^H_s(A)}H_s(p)  +\varepsilon
	\end{equation}
    and
    \begin{equation}
        \min_{p \in \phi^H_s(A)}H_s(p) -\varepsilon \le m(s) \le \min_{p \in \phi^H_s(A)}H_s(p)-\frac{\varepsilon}{2}.
    \end{equation}
	Fix $R>0$ sufficiently large so that $R> \max_{p,s}H_s(p) -\min_{p,s}H_s(p) +2\varepsilon$.
	Define $a(s', s)\coloneqq m(s)-Rs', b(s',s)\coloneqq M(s)+Rs'$ for $(s',s)\in I^2$. 
	We may assume that $I\subset (-\frac{\varepsilon}{2R},+\infty)$ by taking $I$ smaller if necessary, and hence that 
    \begin{equation}
        a(s', s) \le \min_{p \in \phi^H_s(A)}H_s(p) \le \max_{p \in \phi^H_s(A)}H_s(p) \le b(s',s)
    \end{equation}
    for all $(s',s)\in I^2$. 
    Take a smooth function $\tilde{f}\colon I\to \bR$ such that $\|\tilde{f}-f\|_{C^0} \le \varepsilon$. 
    By shrinking $I$, we may assume that $\bigcup_{s} \supp(H_s)$ is relatively compact.
    Then we can also take a compactly supported smooth cut-off function $\chi \colon T^*M\to [0,1]$ such that $\chi\equiv 1$ on a neighborhood of $\bigcup_{s} \supp(H_s)$.
    Using these functions, we define a function $G=(G_{s',s})_{s',s \in I^2} \colon T^*M \times I^2 \to \bR$ by 
    \begin{equation}
        G_{s',s}\coloneqq \left(\rho_{a(s',s),b(s',s)}\circ H_s - (1-s') \tilde{f}(s)\right) \chi.
    \end{equation}
	A 2-parameter family $(\phi_{s',s})_{(s',s)\in I^2}$ of Hamiltonian diffeomorphisms is determined by $\phi_{s',0}=\id_{T^*M} $ and $\frac{\partial \phi_{s',s} }{\partial s}\circ \phi_{s',s}^{-1}=X_{G_{s',s}}$, where $X_{G_{s',s}}$ is the Hamiltonian vector field corresponding to the function $G_{s',s}$. 
	Note that $G_{1,s}=H_s$ and $\phi_{s',s}$ is independent of $s'$ on a neighborhood $U$ of $A$. 
	Moreover, we have 
	\begin{equation}
	\begin{aligned}
        & \int_0^1 \lb \max_{p \in T^*M} G_{0,s}(p) - \min_{p \in T^*M} G_{0,s}(p) \rb ds \\
        \le {} & 
        \int_0^1 \lb \max_{p \in T^*M} \left( \rho_{m(s),M(s)} \circ H_s(p) - \tilde{f}(s) \right)\chi(p) - \min_{p \in T^*M} \left( \rho_{m(s),M(s)} \circ H_s(p) - \tilde{f}(s) \right) \chi(p) \rb 
        \\ 
        \le {} & 
        \int_0^1 \lb \max \left\{ \max_{p \in \phi^H_s(A)} \left( H_s(p) - \tilde{f}(s) \right), 0 \right\} -\min \left\{ \min_{p \in \phi^H_s(A)} \left( H_s(p) - \tilde{f}(s) \right), 0\right\} \rb ds +2\varepsilon \\
        = {} & 
        \int_0^1 \lb \max \left\{ \max_{p \in \phi^H_s(A)} H_s(p), \tilde{f}(s) \right\} -\min \left\{ \min_{p \in \phi^H_s(A)} H_s(p), \tilde{f}(s) \right\} \rb ds +2\varepsilon \\
        \le {} &
        B(H,f,A) +4\varepsilon.
	\end{aligned}
	\end{equation}
    For $s' \in I$, we set $G_{s'} \coloneqq G_{s',\bullet} \colon T^*M \times I \to \bR$.
    Then, by \cref{corollary:stability} and the natural inequality for the distance with respect to functorial operations, we obtain 
    \begin{equation}
        d_{\mathrm{isom}}(F, \cK^{G_0}_1 \bullet F) = d_{\cD(M)}(\cK^0_1 \bullet F, \cK^{G_0}_1 \bullet F) 
        \le 
        B(H,f,A) +4\varepsilon.
    \end{equation}

    We set $\tilde{c}(s)\coloneqq  \int_0^s \tilde{f}(t) dt$ and claim that $\cK^{G_0}_1 \bullet F \simeq T_{-\tilde{c}(1)}\cK^{G_1}_1 \bullet F$. 
    By the result recalled in \cref{subsec:SQ_ham}, we can construct the sheaf quantization $\cK \in \cD(M^2 \times I^2)$ of the 2-parameter family of diffeomorphisms $(\phi_{s',s})_{s',s}$. 
    We shall use the same notation as in \cref{subsec:SQ_ham}.
	Then, $F_{s',0}=0$ and $F_{\bullet,s}|_{\phi_{1,s}(U)\times I}\colon \phi_{1,s}(U)\times I\to \bR, (p,s')\mapsto F_{s',s}(p)$ is locally constant for each $s$. 
    By \eqref{eq:another_function}, we find that $\frac{\partial F_{s',s}}{\partial s}=\frac{\partial G_{s',s}}{\partial s'}=\tilde{f}(s)$ on $\bigcup_s \phi_{1,s}(U)\times I\times \{s\}$ and that $F_{s',s} = \int_0^s \tilde{f}(t) dt=\tilde{c}(s)$ there. 
    We define $\cH \coloneqq \cK \bullet F \in \cD(M \times I^2)$. 
    Then, by the microsupport estimate, we have 
    \begin{equation}
    \begin{aligned}
        \MS(\cH) 
        \subset 
        \left\{ \lb \widehat{\phi}_{s',s}(x,t;\xi,\tau), (s';-\tau \tilde{c}(s)), (s;-\tau G_{s',s}(\phi_{s',s}(x;\xi/\tau))) \rb \; \middle| \right. \\ 
        \left. (x,t;\xi,\tau) \in \rMS(F), s',s \in I \right\} 
        \cup 0_{M \times \bR \times I^2}.
    \end{aligned}  
    \end{equation}
    Hence, $M \times \bR \times I \times \{1\}$ is non-characteristic for $\cH$ and we get 
    \begin{equation}
    \begin{aligned}
        \MS(\cH|_{M \times \bR \times I \times \{1\}})
        \subset & 
        \left\{ \lb \widehat{\phi}_{s',1}(x,t;\xi,\tau), (s';- \tilde{c}(1) \tau) \rb \; \middle| \right. \\
        & \quad \left. (x,t;\xi,\tau) \in \rMS(F), s' \in I \right\} 
        \cup 0_{M \times \bR \times I}.
    \end{aligned}       
    \end{equation}
    Define a diffeomorphism $\varphi \colon M \times \bR \times I \simto M \times \bR \times I, (x,t,s') \mapsto (x,t-\tilde{c}(1)s', s')$. 
    Then we have $\MS(\varphi_* \cH|_{M \times \bR \times I \times \{1\}}) \subset T^*(M \times \bR) \times 0_{I}$, which shows $\varphi_* \cH|_{M \times \bR \times I \times \{1\}}$ is the pull-back of a sheaf on $M \times \bR$ by \cite[Prop.~5.4.5]{KS90}.
    In particular, 
    \begin{equation}
    \begin{aligned}
        \cK^{G_0}_1 \bullet F 
        & = 
        \cH|_{M \times \bR \times \{0\} \times \{1\}} \\
        & \simeq 
        (\varphi_* \cH|_{M \times \bR \times I \times \{1\}})|_{\{ s'=0\}} \\
        & \simeq 
        (\varphi_* \cH|_{M \times \bR \times I \times \{1\}})|_{\{ s'=1\}} \\
        & \simeq 
        T_{-\tilde{c}(1)}\cH|_{M \times \bR \times \{1\} \times \{1\}} 
        =
        T_{-\tilde{c}(1)} \cK^{G_1}_1 \bullet F. 
    \end{aligned}
    \end{equation}

    Since $|c-\tilde{c}(1)| \le \varepsilon$, we have $d_{\mathrm{isom}}(T_{-c}\cK^H_1 \bullet F, T_{-\tilde{c}(1)}\cK^H_1 \bullet F) \le \varepsilon$.
    Combining the result above and noticing $G_1=H$, we obtain 
    \begin{equation}
    \begin{aligned}
        d_{\mathrm{isom}}(F, T_{-c} \cK^H_1 \bullet F)
        & \le 
        d_{\mathrm{isom}}(F, T_{-\tilde{c}(1)} \cK^H_1 \bullet F) + \varepsilon \\
        & = 
        d_{\mathrm{isom}}(F, \cK^{G_0}_1 \bullet F) + \varepsilon \\
        & \le 
        B(H,f,A) +5\varepsilon.
    \end{aligned}
    \end{equation}
    Since $\varepsilon>0$ is arbitrary, this completes the proof.
\end{proof}

\begin{remark}\label{remark:weak}
    Under the same assumption as in \cref{theorem:metric_support}, we can prove the weaker result 
    \begin{equation}
	   d_{\mathrm{w\text{-}isom}}(F,T_{-c} \cK^H_1 \bullet F) \le B(H,f,A) 
	\end{equation}
    more straightforwardly, without the 2-parameter family, as follows. 

    We set $\cH \coloneqq \cK^H \bullet F \in \cD(M \times I)$. 
    Then we have $\cH|_{M \times \bR_t \times \{0\}} \simeq F$ 
	and $\cH|_{M \times \bR_t \times \{1\}} \simeq \cK^H_1 \bullet F$.
    Moreover, by the microsupport estimate, we find that 
	\begin{equation}
	    \MS(\cH) 
	    \subset T^*M \times \left\{ (t,s;\tau,\sigma) \;\middle|\;  -\max_{p \in \phi^H_s(A)} H_s(p) \cdot \tau \le \sigma \le -\min_{p \in \phi^H_s(A)} H_s(p) \cdot \tau \right\}.
	\end{equation}
    Let $\varepsilon>0$ and take a smooth function $\tilde{f}\colon I\to \bR$ such that $\|\tilde{f}-f\|_{C^0} \le \varepsilon$. 
    We define a function $\tilde{c} \colon I \to \bR$ by $\tilde{c}(s) \coloneqq \int_0^s \tilde{f}(s')ds'$ and    
    a function $\varphi \colon M \times \bR_t \times I  \to M \times \bR_t \times I$ by $\varphi(x,t,s) \coloneqq (x,t-\tilde{c}(s),s)$.
    Then we have $\varphi_* \cH|_{M \times \bR_t \times \{0\}} \simeq F$, $\varphi_* \cH|_{M \times \bR_t \times \{1\}} \simeq T_{-\tilde{c}(1)}\cK^H_1 \bullet F$, and     
    \begin{equation}
    \begin{aligned}
        & \MS(\varphi_* \cH) \\
	    \subset {} &
        T^*M \times \left\{ (t,s;\tau,\sigma) \;\middle|\;  - \lb \max_{p \in \phi^H_s(A)} H_s(p) -\tilde{f}(s) \rb \cdot \tau \le \sigma \le - \lb \min_{p \in \phi^H_s(A)} H_s(p) - \tilde{f}(s) \rb \cdot \tau \right\}.
    \end{aligned} 
    \end{equation}
    Note that we may have $\max_{p \in \phi^H_s(A)} H_s(p) -\tilde{f}(s) < 0$ and $\min_{p \in \phi^H_s(A)} H_s(p) - \tilde{f}(s) >0$ in general.
    By applying \cite[Prop.~3.9]{AI22completeness}, we obtain 
    \begin{equation}
    \begin{aligned}
        & d_{\mathrm{w\text{-}isom}}(F,T_{-\tilde{c}(1)}\cK^H_1 \bullet F) \\
        = {} &  
        d_{\mathrm{w\text{-}isom}}(\varphi_* \cH|_{M \times \bR_t \times \{0\}}, \varphi_* \cH|_{M \times \bR_t \times \{1\}}) \\
        \le {} & 
        \int_0^1 \lb \max \left\{ \max_{p \in \phi^H_s(A)} \left( H_s(p) - \tilde{f}(s) \right), 0 \right\} -\min \left\{ \min_{p \in \phi^H_s(A)} \left( H_s(p) - \tilde{f}(s) \right), 0\right\} \rb ds \\
        = {} &
        \int_0^1 \lb \max \left\{ \max_{p \in \phi^H_s(A)} H_s(p), \tilde{f}(s) \right\} -\min \left\{ \min_{p \in \phi^H_s(A)} H_s(p), \tilde{f}(s) \right\} \rb ds \\
        \le {} & 
        \int_0^1 \lb \max \left\{ \max_{p \in \phi^H_s(A)}H_s(p), f(s) \right\} -\min \left\{ \min_{p \in \phi^H_s(A)}H_s(p), f(s)\right\} \rb ds + 2\varepsilon.
    \end{aligned}
    \end{equation}
    Hence, we have 
    \begin{equation}
    \begin{aligned}
        & d_{\mathrm{w\text{-}isom}}(F, T_{-c}\cK^H_1 \bullet F) 
        \le d_{\mathrm{w\text{-}isom}}(F, T_{-\tilde{c}(1)} \cK^H_1 \bullet F) + \varepsilon \\
        \le {} &
        \int_0^1 \lb \max \left\{ \max_{p \in \phi^H_s(A)}H_s(p), f(s) \right\} -\min \left\{ \min_{p \in \phi^H_s(A)}H_s(p), f(s)\right\} \rb ds + 3\varepsilon, 
    \end{aligned}
    \end{equation}
    which completes the proof.    
\end{remark}

For a timewise compactly supported function $H \colon T^*M \times I \to \bR$ and a non-empty closed subset $A$ of $T^*M$, we set 
\begin{equation}
    \|H\|_{\mathrm{osc},A} \coloneqq \int_0^1 \left(\max_{p \in A} H_s(p) - \min_{p \in A} H_s(p) \right) ds.
\end{equation}

\begin{proposition}
    Let $A$ be a non-empty closed subset of $T^*M$ and $F \in \cD_A(M)$.
    Moreover, let $H \colon T^*M \times I \to \bR$ be a timewise compactly supported function. 
    Then there exists $c \in \bR$ such that 
	\begin{equation}
		d_{\mathrm{isom}}(F,T_{-c}\cK^H_1 \bullet F)
		\le \|H\|_{\mathrm{osc},A}.
	\end{equation}
\end{proposition}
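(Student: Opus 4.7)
My plan is to combine \cref{corollary:stability} with the 2-parameter family technique from the proof of \cref{theorem:metric_support}, constructing an auxiliary Hamiltonian whose global oscillation is measured against the static set $A$ rather than the moving set $\phi^H_s(A)$. First I fix $\varepsilon > 0$ and choose a smooth function $f \colon I \to \bR$ with $\min_{p \in A} H_s(p) \le f(s) \le \max_{p \in A} H_s(p)$ for each $s$, setting $c := \int_0^1 f(s)\,ds$; the goal becomes $d_{\mathrm{isom}}(F, T_{-c}\cK^H_1 \bullet F) \le \|H\|_{\mathrm{osc},A} + O(\varepsilon)$, from which letting $\varepsilon \to 0$ yields the proposition.

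Following the scheme in the proof of \cref{theorem:metric_support}, I would build a 2-parameter Hamiltonian $(G_{s',s})_{(s',s) \in I^2}$ of the general form $G_{s',s} = (\rho_{a(s',s),b(s',s)} \circ H_s - (1-s')\tilde{f}(s))\chi$ with the key properties: (i) the flow of $G_{s',\bullet}$ is independent of $s'$ on a neighborhood of $A$, so that the 2-parameter sheaf-quantization argument yields $\cK^{G_0}_1 \bullet F \simeq T_{-c}\cK^{G_1}_1 \bullet F$; (ii) $\cK^{G_1}_1 \bullet F \simeq \cK^H_1 \bullet F$ in $\cD(M)$; and (iii) $\|G_{0,\bullet}\|_{\mathrm{osc}} \le \|H\|_{\mathrm{osc},A} + O(\varepsilon)$. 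Then \cref{corollary:stability} applied to $G_0$ gives $d_{\mathrm{isom}}(F, \cK^{G_0}_1 \bullet F) \le \|G_{0,\bullet}\|_{\mathrm{osc}}$, which together with (i) and (ii) delivers the desired inequality.

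The hard part will be reconciling (ii) with (iii). A direct clipping of $H_s$ to $[\min_A H_s, \max_A H_s]$ fails because $H_s$ typically exceeds this range on the trajectory $\phi^H_s(A)$, so cutting $H_s$ at $s'=0$ modifies $H$ on $\phi^H_s(A)$ and breaks the compatibility with the $H$-flow needed for (ii). I expect the resolution to involve a refined 2-parameter construction in which the interpolation also deforms $H$ along the flow $\phi^H$---roughly, at $s'=0$ one works with values related to $H_s \circ \phi^H_s$ (which encode the oscillation over $A$), while at $s'=1$ one recovers $H_s$ in a neighborhood of $\phi^H_s(A)$. Executing this while preserving (i), so that the microsupport argument from the proof of \cref{theorem:metric_support} still identifies $\cK^{G_0}_1 \bullet F$ with $T_{-c}\cK^{G_1}_1 \bullet F$, is the essential technical step.
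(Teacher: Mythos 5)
There is a genuine gap: your plan stops exactly at the step that carries the whole proof. You correctly observe that clipping $H_s$ to $[\min_A H_s,\max_A H_s]$ destroys compatibility with the flow, but the ``refined 2-parameter construction'' you then invoke is only described as an expectation, not executed. Moreover, as you have set them up, your requirements (i) and (iii) are incompatible: if the flow of $G_{s',\bullet}$ is independent of $s'$ on a neighborhood of $A$, then $G_{0,s}$ agrees with $H_s$ up to a time-dependent constant on a neighborhood of $\phi^{G_0}_s(A)=\phi^H_s(A)$, so $\|G_{0,\bullet}\|_{\mathrm{osc}} \ge \int_0^1 \bigl( \max_{p\in\phi^H_s(A)}H_s(p)-\min_{p\in\phi^H_s(A)}H_s(p) \bigr)\,ds$, which in general strictly exceeds $\|H\|_{\mathrm{osc},A}$. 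So the mechanism that makes the proof of \cref{theorem:metric_support} work (keeping the flow untouched near $A$ while cutting off far away) cannot by itself produce the static oscillation $\|H\|_{\mathrm{osc},A}$, and your proposal offers no concrete replacement for it.

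The paper resolves this without any new 2-parameter construction. Using the reparametrization technique of \cite[Thm.~1.3]{usher2015observations}, one builds a Hamiltonian $K$ with $\phi^K_1=\phi^H_1$ such that
\begin{equation*}
  \int_0^1 \Bigl( \max_{p \in \phi^K_s(A)}K_s(p) - \min_{p \in \phi^K_s(A)}K_s(p) \Bigr) ds
  = \int_0^1 \Bigl( \max_{p \in A}H_s(p) - \min_{p \in A}H_s(p) \Bigr) ds
  = \|H\|_{\mathrm{osc},A},
\end{equation*}
i.e.\ the oscillation of $K$ over its own moving set equals the oscillation of $H$ over the static set $A$. Then \cite[Prop.~5.7]{AI22completeness} gives $\cK^H_1 \simeq \cK^K_1$ because the time-one maps coincide, and the desired inequality follows by applying \cref{theorem:metric_support} (in the form of \cref{remark:metric_support}) to $K$, which also produces the shift $c$. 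These two ingredients---Usher's trick and the invariance of the sheaf quantization under change of Hamiltonian with fixed time-one map---are the missing ideas in your proposal; with them the statement is an immediate corollary of \cref{theorem:metric_support}, whereas without them your interpolation scheme has no proof of its key step.
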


\begin{proof}
    Using the technique in the proof of \cite[Theorem~1.3]{usher2015observations}, one can construct a function $H'$ such that $\phi^H_1=\phi^{H'}_1$ and 
	\begin{equation*}
	\begin{aligned}
	    & \int_0^1 \lb \max_{p \in A}H_s(p) - \min_{p \in A}H_s(p) \rb ds \\
	    ={} & \int_0^1 \lb \max_{p \in \phi^{H'}_s(A)}H'_s(p) - \min_{p \in \phi^{H'}_s(A)}H'_s(p) \rb ds.
	\end{aligned}
	\end{equation*}
	By \cite[Proposition~5.7]{AI22completeness}, $\phi^H_1=\phi^{H'}_1$ implies $\cK^H_1 \simeq \cK^{H'}_1$. 
	Hence, the result follows from \cref{theorem:metric_support} (see also \cref{remark:metric_support}).
\end{proof}

\printbibliography

\noindent Tomohiro Asano: 
Research Institute for Mathematical Sciences, Kyoto University, Kitashirakawa-Oiwake-Cho, Sakyo-ku, 606-8502, Kyoto, Japan.

\noindent \textit{E-mail address}: \texttt{tasano@kurims.kyoto-u.ac.jp}, \texttt{tomoh.asano@gmail.com}

\medskip

\noindent Yuichi Ike:
Institute of Mathematics for Industry, Kyushu University, 744 Motooka Nishi-ku, 819-0395, Fukuoka, Japan.

\noindent
\textit{E-mail address}: \texttt{ike@imi.kyushu-u.ac.jp}, \texttt{yuichi.ike.1990@gmail.com}

\end{document}